\begin{document}

\title{Refinements of A-numerical radius inequalities and their applications
}


\author{Pintu Bhunia,  Raj Kumar Nayak  and Kallol Paul }


\institute{Pintu Bhunia \at
               Department of Mathematics, Jadavpur University\\
							Kolkata 700032, West Bengal, India\\
              \email{pintubhunia5206@gmail.com}
           \and
           Raj Kumar Nayak \at
              Department of Mathematics, Jadavpur University\\
							Kolkata 700032, West Bengal, India\\
							\email{rajkumarju51@gmail.com}
						\and	
						Kallol Paul \at
							Department of Mathematics, Jadavpur University\\
							Kolkata 700032, West Bengal, India\\
							\email{kalloldada@gmail.com}
							}

\date{Received: date / Accepted: date}

\maketitle

\begin{abstract}
We present sharp lower bounds for the A-numerical radius of semi-Hilbertian space operators. We also present an upper bound. Further we compute new upper bounds for the $B$-numerical radius of $2 \times 2$ operator matrices where $B = \textit{diag}(A,A)$, $A$ being a positive operator. As an application of the A-numerical radius inequalities, we obtain a bound for the zeros of a polynomial which is quite a bit improvement of some famous existing bounds for the zeros of  polynomials.
\keywords{A-numerical radius; A-adjoint operator; zero of polynomial.}
 \subclass{47A12; 26C10; 47A05; 46C05.}
\end{abstract}

\section{Introduction}
\label{intro}
\noindent Let $\mathcal{H}$ be a non trivial complex Hilbert space with inner product $\langle .,. \rangle$ and $\|.\|$ be the norm induced from $\langle .,. \rangle$.
Let $\mathcal{B}(\mathcal{H})$ denote the $C^*$-algebra of all bounded linear operators acting on $\mathcal{H}.$  In this article, by an operator we mean a bounded linear operator. Let $T\in \mathcal{B}(\mathcal{H})$. The numerical range  of $T$ is the set of all scalars given by: 
\begin{eqnarray*}
W(T) &=& \{\langle Tx,x\rangle: x\in \mathcal{H}, \|x\|=1\}.
\end{eqnarray*}
It is well-known that $W(T)$ is convex and the closure of $W(T)$ contains the spectrum  of $T$ which is denoted by $\sigma (T)$. These are two of the most important properties of the numerical range (see \cite {GR}). The numerical radius of $T$ is defined as:
\begin{eqnarray*}
w(T) &=& \sup_{\|x\|=1} \left |\langle Tx,x\rangle \right|.
\end{eqnarray*}
The spectral radius of $T$ is given by: 
\[r(T)=\sup\{|\lambda|: \lambda \in \sigma (T)\}.\]
It is clear to see that $r(T) \leq w(T).$ 
In this article, we study the generalization of this numerical radius, i.e., known as the A-numerical radius of semi-Hilbertian space operators. For this the following definitions, notations and terminologies are essential.

\begin{definition} 
An operator $A\in \mathcal{B}(\mathcal{H})$ is called positive if $\langle Ax,x \rangle \geq 0$ for all $x\in \mathcal{H}$ and  is called strictly positive if $\langle Ax,x \rangle > 0$ for all non-zero $x\in \mathcal{H}$.
\end{definition}
\noindent  For a positive (strictly positive) operator $A$ we write $A\geq 0$ $(A>0)$. We used $A$ to signify a positive operator defined  on $\mathcal{H}$. Then $A$ induces a positive semi-definite sesquilinear form $\langle .,. \rangle _A : \mathcal{H} \times \mathcal{H} \rightarrow \mathbb{C}$ defined as $\langle x,y \rangle _A=\langle Ax,y \rangle$ for all $x,y\in \mathcal{H}$. Let $\|.\|_A$ denote the semi-norm on $\mathcal{H}$ induced from the  sesquilinear form $\langle .,. \rangle_A,$ i.e., $\|x\|_A=\sqrt{\langle x,x \rangle_A}$ for all $x\in \mathcal{H}.$ It is easy to verify that $\|.\|_A$ is a norm if and only if $A>0$. Given $T\in \mathcal{B}(\mathcal{H})$, if there exists $c>0$ such that $\|Tx\|_A\leq c\|x\|_A$ for all $x\in  \overline{\mathcal{R}(A)}$ where $\mathcal{R}(A)$ is the range of $A$, then  the A-operator semi-norm of $T$ is given by: 
\[\|T\|_A=\sup_{x\in  \overline{\mathcal{R}(A)},x\neq 0}\frac{\|Tx\|_A}{\|x\|_A}< +\infty.\] 
The symbols $I$ and $O$ stands for the identity operator and the zero operator defined on $\mathcal{H}$, respectively. 
The generalization of the numerical range, known as the A-numerical range (see \cite{BFA}), is denoted by $W_A(T)$ and defined as: 
\[W_A(T) = \{\langle Tx,x\rangle_A: x\in \mathcal{H}, \|x\|_A=1\}.\] 
 
\noindent The A-numerical radius $w_A(T)$ and the A-Crawford number $m_A(T)$ of $T$ are given by: 
\[ w_A(T)=\sup\{|\lambda|: \lambda \in W_A(T)\}, \]
\[m_A(T)=\inf\{|\lambda|: \lambda \in W_A(T)\}.\]

\noindent In general, $w_A(T)$ can be equal to $+\infty$ for an arbitrary operator $T\in \mathcal{B}(\mathcal{H}).$ Indeed, one can take the operator $A=\left(\begin{array}{cc}
0&0\\
0&1
\end{array}\right)$ and $T=\left(\begin{array}{cc}
0&1\\
1&0
\end{array}\right)$ on $\mathbb{C}^2.$ In addition, $\|T\|_A$ can be equal to $+\infty$ for an arbitrary operator $T\in \mathcal{B}(\mathcal{H}).$\\

\noindent For a given $T\in \mathcal{B}(\mathcal{H})$, if there exists $c>0$ such that $\|Tx\|_A \leq c\|x\|_A$ for all $x\in  \mathcal{H}$, then the A-numerical radius $w_A(T)$ of $T$ satisfies the following inequality, (see \cite{F,Z}):
\[\frac{1}{2}\|T\|_A \leq w_A(T)\leq \|T\|_A.\]
\noindent Various A-numerical radius inequalities improving this above inequality have been studied in \cite{BP}, \cite{BPN}, \cite{BFP}, \cite{KF}, \cite{MXZ} and \cite{Z}. Now we recall the following definition:
\begin{definition}
For $T \in \mathcal{B}(\mathcal{H})$, an operator $R \in \mathcal{B}(\mathcal{H})$ is called an $A$-adjoint of $T$ if for every $x,y \in \mathcal{H}$ such that $\langle Tx,y \rangle_A=\langle x,Ry \rangle_A$, i.e., $AR=T^*A$ where $T^*$ is the adjoint of $T$.
\end{definition}

\noindent Neither the existence nor the uniqueness of $A$-adjoint of $T$ holds true in general. The set of all operators in  $\mathcal{B}^{A}(\mathcal{H})$ which admit $A$-adjoints is denoted by $\mathcal{B}_{A}(\mathcal{H})$, where  $\mathcal{B}^{A}(\mathcal{H})=\{T \in \mathcal{B}(\mathcal{H}) : \|T\|_A < +\infty\}$. By Douglas Theorem \cite{doug}, we  have
\begin{align*}\label{badeh}
\mathcal{B}_{A}(\mathcal{H})
& = \left\{T\in \mathcal{B}(\mathcal{H})\,:\;\mathcal{R}(T^{*}A)\subseteq \mathcal{R}(A)\right\}\nonumber\\
 &=\left\{T \in \mathcal{B}(\mathcal{H})\,:\;\exists \,\lambda > 0\,\textit{such that}\;\|ATx\| \leq \lambda \|Ax\|,\;\forall\,x\in \mathcal{H}  \right\}.
\end{align*}
If $T\in \mathcal{B}_A(\mathcal{H})$, the reduced solution of the equation
$AX=T^*A$ is a distinguished $A$-adjoint operator of $T$, which is denoted by $T^{\sharp_A}$. Note that, $T^{\sharp_A}=A^\dag T^*A$ in which $A^\dag$ is the Moore-Penrose inverse of $A$.


\begin{definition}
An operator $T\in \mathcal{B}(\mathcal{H})$ is said to be A-self-adjoint if $AT$ is self-adjoint, i.e., $AT=T^*A$ and it is called A-positive if $AT\geq 0.$
\end{definition}

\begin{definition}
An operator $U\in  \mathcal{B}_A(\mathcal{H})$ is said to be $A$-unitary if $\|Ux\|_A=\|x\|_A$ and $\|U^{\sharp_A}x\|_A=\|x\|_A$ for all $x\in \mathcal{H}$.
\end{definition}

\noindent For $T\in \mathcal{B}_A(\mathcal{H})$, if $U\in \mathcal{B}_A(\mathcal{H})$ be an $A$-unitary then the following equality for the A-numerical range (see \cite{BFA}) holds: $$W_A(T)=W_A(UTU^{\sharp_A}).$$

\noindent We know that if $T\in \mathcal{B}_A(\mathcal{H})$ then $T^{\sharp_A}\in \mathcal{B}_A(\mathcal{H})$ and $(T^{\sharp_A})^{\sharp_A}=PTP$, where $P$ is the  orthogonal projection onto $\overline{\mathcal{R}(A)}$. For $T\in \mathcal{B}_A(\mathcal{H})$, it is useful to recall that the operators  $T^{\sharp_A}T$ and $TT^{\sharp_A}$ both are A-positive operators satisfying 
$$\|T^{\sharp_A}T\|_A =\|TT^{\sharp_A}\|_A =\|T\|^2_A =\|T^{\sharp_A}\|^2_A.$$  For more information on this we refer \cite{AS} and \cite{ACG2}. We note the following  properties which are used repeatedly in this article. For $T,S \in  \mathcal{B}_A(\mathcal{H})$,  $(TS)^{\sharp_A}=S^{\sharp_A}T^{\sharp_A}$, $\|TS\|_A\leq \|T\|_A\|S\|_A$ and  $\|Tx\|_A\leq \|T\|_A\|x\|_A$, for all $x\in \mathcal{H}$. For $T\in \mathcal{B}_A(\mathcal{H})$, we write $\textit{Re}_A(T)=\frac{1}{2}(T+T^{\sharp_A})$ and $\textit{Im}_A(T)=\frac{1}{2i}(T-T^{\sharp_A})$. In \cite[Lemma 2.3]{Z}, Zamani showed that if $T\in \mathcal{B}_A(\mathcal{H})$ then  for all $\theta \in \mathbb{R},$
$$w_A(\textit{Re}_A(e^{i\theta}T))=\|\textit{Re}_A(e^{i\theta}T)\|_A.$$
In \cite[Th. 2.5]{Z}, Zamani also proved that if $T\in \mathcal{B}_A(\mathcal{H})$ then 
$$ w_A(T)=\sup_{\theta \in \mathbb{R}}\left\|\textit{Re}_A(e^{i\theta}T) \right\|_A.$$


\noindent Using the above A-numerical radius equality of semi-Hilbertian space operators, we obtain new lower bounds for the A-numerical radius of semi-Hilbertian space operators which improve on the bounds in \cite{Z}, recently obtained by Zamani. We find a new upper bound of the A-numerical radius for semi-Hilbertian space operators. Also we obtain some new bounds for the B-numerical radius of $2 \times 2$ operator matrices where $B=\textit{diag} (A,A)$. In the last section, we obtain a bound for zeros of a monic polynomial using the A-numerical radius of semi-Hilbertian space operators.

\section{\textbf{A-numerical radius bounds}}
\label{sec:1}

\noindent In this section we obtain some new bounds for the generalized numerical radius, i.e., the A-numerical radius of semi-Hilbertian space operators which are refinement of existing bounds.  We begin this section with the following lower bound.
\begin{theorem}\label{th-2.1}
Let $T\in \mathcal{B}_A(\mathcal{H})$. Then $$w_A(T)\geq \sqrt{\|\textit{Re}_A(T)\|_A^2+ m_A^2(\textit{Im}_A(T))}.$$ 
\end{theorem}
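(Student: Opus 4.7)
The plan is to write $T$ in its $A$-Cartesian form $T = R + iS$ with $R = \textit{Re}_A(T)$ and $S = \textit{Im}_A(T)$, derive a Pythagorean identity for $|\langle Tx, x\rangle_A|^2$, and then split the resulting supremum using the definition of the $A$-Crawford number together with Zamani's norm-equals-numerical-radius identity for the $A$-real part.

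First I would check that for every $x \in \mathcal{H}$ the scalars $\langle Rx, x\rangle_A$ and $\langle Sx, x\rangle_A$ are real and coincide with $\textit{Re}\,\langle Tx, x\rangle_A$ and $\textit{Im}\,\langle Tx, x\rangle_A$ respectively. This uses the defining identity $AT^{\sharp_A} = T^*A$ together with $A = A^*$ to produce
$$\langle AT^{\sharp_A}x, x\rangle \;=\; \langle T^*Ax, x\rangle \;=\; \langle Ax, Tx\rangle \;=\; \overline{\langle ATx, x\rangle},$$
so that averaging the pairings gives $\langle Rx,x\rangle_A = \textit{Re}\,\langle Tx,x\rangle_A$ and $\langle Sx,x\rangle_A = \textit{Im}\,\langle Tx,x\rangle_A$. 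Squaring and adding, for every $x$ with $\|x\|_A = 1$,
$$|\langle Tx, x\rangle_A|^2 \;=\; \langle Rx, x\rangle_A^2 + \langle Sx, x\rangle_A^2.$$

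Now using $|\langle Sx, x\rangle_A| \geq m_A(S)$ for each $A$-unit $x$, I would bound $\langle Rx, x\rangle_A^2 + \langle Sx, x\rangle_A^2 \geq \langle Rx, x\rangle_A^2 + m_A^2(S)$ pointwise and take the supremum over $\{x : \|x\|_A = 1\}$ to conclude
$$w_A^2(T) \;\geq\; w_A^2(R) + m_A^2(S).$$
To convert $w_A(R)$ into $\|R\|_A$ I would invoke Zamani's identity $w_A(\textit{Re}_A(e^{i\theta}T)) = \|\textit{Re}_A(e^{i\theta}T)\|_A$ quoted in the introduction, evaluated at $\theta = 0$, giving $w_A(R) = \|R\|_A$. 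Taking square roots produces the stated inequality.

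The only delicate point is the opening reality check, where one cannot appeal directly to $A$-self-adjointness of $R$ (since $(T^{\sharp_A})^{\sharp_A} = PTP$, so $R$ is $A$-self-adjoint only up to the projection $P$ onto $\overline{\mathcal{R}(A)}$); instead one must work at the level of the pairing $\langle AT^{\sharp_A}x, x\rangle$ as above. Once that identity is in hand the rest is a one-line splitting of the supremum and an immediate application of the quoted Zamani equality.
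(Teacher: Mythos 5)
Your proposal is correct and follows essentially the same route as the paper: the $A$-Cartesian decomposition, the Pythagorean identity $|\langle Tx,x\rangle_A|^2=\langle \textit{Re}_A(T)x,x\rangle_A^2+\langle \textit{Im}_A(T)x,x\rangle_A^2$, the pointwise lower bound by $m_A^2(\textit{Im}_A(T))$, and Zamani's equality $w_A(\textit{Re}_A(T))=\|\textit{Re}_A(T)\|_A$. The only (harmless) difference is that you split the supremum directly, whereas the paper passes through a sequence $\{x_n\}$ extremizing $\langle \textit{Re}_A(T)x_n,x_n\rangle_A$.
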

\begin{proof} 
For all $x\in \mathcal{H}$, we have that $\langle \textit{Re}_A(T)x,x\rangle_A, \langle \textit{Im}_A(T)x,x\rangle_A \in \mathbb{R}$ because $$\langle \textit{Re}_A(T)x,x\rangle_A=\frac{1}{2}\langle Tx,x\rangle_A+\frac{1}{2}\overline {\langle Tx,x\rangle_A},$$ $$\langle \textit{Im}_A(T)x,x\rangle_A=\frac{1}{2i}\langle Tx,x\rangle_A-\frac{1}{2i}\overline {\langle Tx,x\rangle_A}.$$ Therefore, there exists a sequence $\{x_n\}$ in $\mathcal{H}$ with $\|x_n\|_A=1$ such that $$\langle \textit{Re}_A(T)x_n,x_n\rangle_A \rightarrow \lambda \in \mathbb{R} \,\,~~\textit{and}~~\,\, |\lambda |=w_A(\textit{Re}_A(T)).$$ Also $\|\textit{Re}_A(T)\|_A=w_A(\textit{Re}_A(T))$, so  $\|\textit{Re}_A(T)\|_A=|\lambda|$.  Now, we have 
\begin{eqnarray*}
|\langle Tx_n,x_n \rangle_A |^2 &=&|\langle (\textit{Re}_A(T)+i\textit{Im}_A(T))x_n,x_n \rangle_A|^2 \\
&=&|\langle \textit{Re}_A(T)x_n,x_n \rangle_A +i \langle \textit{Im}_A(T)x_n,x_n \rangle_A|^2 \\
 &=& \langle \textit{Re}_A(T)x_n,x_n \rangle_A^2 + \langle \textit{Im}_A(T)x_n,x_n \rangle_A^2\\
&\geq& |\langle \textit{Re}_A(T)x_n,x_n \rangle_A|^2 +m_A^2(\textit{Im}_A(T)).
\end{eqnarray*}
Since $w_A(T)=\sup_{\|x\|_A=1}|\langle Tx,x\rangle_A|,$ so we get

\begin{eqnarray*}
w^2_A(T) &\geq& |\langle Tx_n,x_n \rangle_A |^2\\
&\geq& |\langle \textit{Re}_A(T)x_n,x_n \rangle_A|^2 +m_A^2(\textit{Im}_A(T)).
\end{eqnarray*}
Taking supremum over $\|x_n\|_A=1$, $x_n\in \mathcal{H}$,  we get

\begin{eqnarray*}
w^2_A(T)  &\geq& \lambda^2 +m_A^2(\textit{Im}_A(T))\\
\Rightarrow {w^2_A(T)}&\geq&  {\|\textit{Re}_A(T)\|_A^2 +m_A^2(\textit{Im}_A(T))}.
\end{eqnarray*}
This is the required inequality of the theorem.
\end{proof}
Using an argument similar to that in the proof of Theorem \ref{th-2.1}, we can prove the following theorem.
\begin{theorem}\label{th-2.2}
Let $T\in \mathcal{B}_A(\mathcal{H})$. Then $$w_A(T)\geq \sqrt{\|\textit{Im}_A(T)\|_A^2+ m_A^2(\textit{Re}_A(T))}.$$
\end{theorem}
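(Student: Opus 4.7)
The plan is to mimic the proof of Theorem \ref{th-2.1}, with the roles of $\textit{Re}_A(T)$ and $\textit{Im}_A(T)$ interchanged. The first thing to verify is the substitute for the identity $\|\textit{Re}_A(T)\|_A = w_A(\textit{Re}_A(T))$, namely that $\|\textit{Im}_A(T)\|_A = w_A(\textit{Im}_A(T))$. This follows immediately from Zamani's equality $w_A(\textit{Re}_A(e^{i\theta}T)) = \|\textit{Re}_A(e^{i\theta}T)\|_A$ applied at $\theta = -\pi/2$, since a short computation using $(-iT)^{\sharp_A} = iT^{\sharp_A}$ shows that $\textit{Re}_A(-iT) = \textit{Im}_A(T)$.

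With that in hand, I would proceed exactly as before. Since $\langle \textit{Im}_A(T)x,x\rangle_A \in \mathbb{R}$ for every $x \in \mathcal{H}$, there exists a sequence $\{x_n\} \subset \mathcal{H}$ with $\|x_n\|_A = 1$ such that $\langle \textit{Im}_A(T)x_n,x_n\rangle_A \to \lambda \in \mathbb{R}$ with $|\lambda| = w_A(\textit{Im}_A(T)) = \|\textit{Im}_A(T)\|_A$. Then, expanding $T = \textit{Re}_A(T) + i\,\textit{Im}_A(T)$ and using the fact that both $\langle \textit{Re}_A(T)x_n,x_n\rangle_A$ and $\langle \textit{Im}_A(T)x_n,x_n\rangle_A$ are real, one obtains
\[
|\langle Tx_n,x_n\rangle_A|^2 = \langle \textit{Re}_A(T)x_n,x_n\rangle_A^2 + \langle \textit{Im}_A(T)x_n,x_n\rangle_A^2 \geq m_A^2(\textit{Re}_A(T)) + \langle \textit{Im}_A(T)x_n,x_n\rangle_A^2,
\]
by the definition of the A-Crawford number.

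Since $w_A^2(T) \geq |\langle Tx_n,x_n\rangle_A|^2$ for every $n$, letting $n \to \infty$ gives $w_A^2(T) \geq m_A^2(\textit{Re}_A(T)) + \lambda^2 = m_A^2(\textit{Re}_A(T)) + \|\textit{Im}_A(T)\|_A^2$, which is the desired inequality. There is no real obstacle here: the only non-bookkeeping ingredient is the identification $\|\textit{Im}_A(T)\|_A = w_A(\textit{Im}_A(T))$ via Zamani's lemma; the rest is the same algebraic expansion plus the Crawford-number lower bound that drove the proof of Theorem \ref{th-2.1}.
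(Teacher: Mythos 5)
Your proof is correct and follows exactly the route the paper intends: the paper gives no separate argument for Theorem \ref{th-2.2}, merely stating that it follows by an argument similar to that of Theorem \ref{th-2.1}, and your write-up is precisely that adaptation with the roles of $\textit{Re}_A(T)$ and $\textit{Im}_A(T)$ interchanged. You also correctly supply the one detail that genuinely needs checking, namely $\|\textit{Im}_A(T)\|_A = w_A(\textit{Im}_A(T))$, obtained from the identity $\textit{Im}_A(T)=\textit{Re}_A(-iT)$ together with Zamani's lemma.
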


\begin{remark}\label{re-2.3}
Zamani in \cite[Cor. 2.7]{Z} proved that $w_A(T)\geq \|\textit{Re}_A(T)\|_A$ and $w_A(T)\geq \|\textit{Im}_A(T)\|_A$. Clearly the inequalities obtained by us in Theorem \ref{th-2.1} and Theorem \ref{th-2.2} are better than the  corresponding inequalities $w_A(T)\geq \|\textit{Re}_A(T)\|_A$ and $w_A(T)\geq \|\textit{Im}_A(T)\|_A$. Consider $T= \left(\begin{array}{cc}
1+i & 0 \\
0 & 2+i 
\end{array}\right)$ and  $A= \left(\begin{array}{cc}
1 & 0 \\
0 & 1 
\end{array}\right)$. Then Theorem \ref{th-2.1} gives $w_A(T)\geq \sqrt{5}$, whereas $w_A(T)\geq \|\textit{Re}_A(T)\|_A$ gives  $w_A(T)\geq 2.$ Also Theorem \ref{th-2.2} gives $w_A(T)\geq \sqrt{2}$, whereas $w_A(T)\geq \|\textit{Im}_A(T)\|_A$ gives  $w_A(T)\geq 1.$
\end{remark}

Next we give an upper bound for the A-numerical radius of semi-Hilbertian space operators. 

\begin{theorem}\label{theorem:upperbound2}
Let $T \in \mathcal{B}_A(\mathcal{H})$. For any $\phi \in [0,2\pi)$, let $ H^A_{\phi}=\textit{Re}_A(e^{i \phi} T)$.  Then \[w^2_A(T) \leq {{\|H^A_\phi\|_A}^2+{\|H^A_{\phi+\frac{\pi}{2}}\|_A}^2}.\]
\end{theorem}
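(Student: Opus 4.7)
The plan is to use Zamani's identity $w_A(T) = \sup_{\theta \in \mathbb{R}} \|\textit{Re}_A(e^{i\theta}T)\|_A$ together with a rotational decomposition of $\textit{Re}_A(e^{i\theta}T)$ relative to the fixed angle $\phi$. The clean geometric picture is that the one-parameter family $\{\textit{Re}_A(e^{i\theta}T)\}_\theta$ lies in the real span of $H^A_\phi$ and $H^A_{\phi+\pi/2}$, and then a Cauchy--Schwarz estimate on $|a\cos\alpha+b\sin\alpha|$ gives the desired bound.

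First I would write $e^{i\theta}T = e^{i(\theta-\phi)}(e^{i\phi}T)$, set $\alpha=\theta-\phi$ and $S=e^{i\phi}T$, and expand
\[\textit{Re}_A(e^{i\alpha}S) = \tfrac{1}{2}(e^{i\alpha}S + e^{-i\alpha}S^{\sharp_A}) = \cos\alpha\cdot\textit{Re}_A(S) - \sin\alpha\cdot\textit{Im}_A(S).\]
A direct computation of $H^A_{\phi+\pi/2} = \textit{Re}_A(iS) = \frac{1}{2}(iS - iS^{\sharp_A})$ shows that $H^A_{\phi+\pi/2} = -\textit{Im}_A(S)$, so I obtain the key identity
\[\textit{Re}_A(e^{i\theta}T) \;=\; \cos(\theta-\phi)\,H^A_\phi + \sin(\theta-\phi)\,H^A_{\phi+\pi/2}.\]

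Next, for any unit vector $x\in\mathcal{H}$ (i.e., $\|x\|_A=1$), taking the $\langle \cdot, x\rangle_A$ pairing and applying the Cauchy--Schwarz inequality in $\mathbb{R}^2$ to the real numbers $\langle H^A_\phi x,x\rangle_A$ and $\langle H^A_{\phi+\pi/2} x,x\rangle_A$ yields
\[\bigl|\langle \textit{Re}_A(e^{i\theta}T)x,x\rangle_A\bigr| \;\leq\; \sqrt{\langle H^A_\phi x,x\rangle_A^{\,2}+\langle H^A_{\phi+\pi/2} x,x\rangle_A^{\,2}}.\]
Since $H^A_\phi$ and $H^A_{\phi+\pi/2}$ are $A$-self-adjoint, Zamani's equality $w_A(\textit{Re}_A(e^{i\theta}T))=\|\textit{Re}_A(e^{i\theta}T)\|_A$ applies to each of them, giving $\|H^A_\phi\|_A=w_A(H^A_\phi)\geq|\langle H^A_\phi x,x\rangle_A|$ and likewise for the other, so the right-hand side is bounded by $\sqrt{\|H^A_\phi\|_A^2+\|H^A_{\phi+\pi/2}\|_A^2}$.

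Finally, taking the supremum over $\|x\|_A=1$ gives $\|\textit{Re}_A(e^{i\theta}T)\|_A\leq \sqrt{\|H^A_\phi\|_A^2+\|H^A_{\phi+\pi/2}\|_A^2}$, and then taking the supremum over $\theta\in\mathbb{R}$ and invoking $w_A(T)=\sup_\theta \|\textit{Re}_A(e^{i\theta}T)\|_A$ delivers the inequality after squaring. The only subtle point to verify carefully is the sign/conjugation bookkeeping in the decomposition step, because $\sharp_A$ is conjugate-linear and the two-fold composition $(\cdot)^{\sharp_A\sharp_A}$ is $P(\cdot)P$ rather than the identity; beyond that everything reduces to the planar Cauchy--Schwarz estimate.
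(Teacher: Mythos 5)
Your proof is correct and takes essentially the same route as the paper: both arguments rest on the rotational decomposition $\textit{Re}_A(e^{i\theta}T)=\cos(\theta-\phi)\,H^A_\phi+\sin(\theta-\phi)\,H^A_{\phi+\frac{\pi}{2}}$ combined with Zamani's identity $w_A(T)=\sup_{\theta\in\mathbb{R}}\|\textit{Re}_A(e^{i\theta}T)\|_A$. The only immaterial difference is where the planar Cauchy--Schwarz estimate is applied: the paper uses the triangle inequality directly on the operator semi-norms, while you apply it to the quadratic forms $\langle \cdot\, x,x\rangle_A$ and then pass back to the semi-norm via $w_A(\textit{Re}_A(e^{i\theta}T))=\|\textit{Re}_A(e^{i\theta}T)\|_A$.
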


\begin{proof}
 Clearly $H^A_{\theta}=\cos\theta \textit{Re}_A(T)-\sin\theta \textit{Im}_A(T)$, so for an any $\phi \in [0,2\pi)$, we get 
\begin{eqnarray*}
 H^A_{\theta+\phi} &=& \cos(\theta+\phi)\textit{Re}_A(T)-\sin(\theta+\phi)\textit{Im}_A(T)\\
   &=& \cos\theta [\cos\phi \textit{Re}_A(T)-\sin\phi \textit{Im}_A(T)]-\sin\theta [-\cos(\phi+\frac{\pi}{2}) \textit{Re}_A(T) \\
	 & &  +  \sin(\phi+\frac{\pi}{2}) \textit{Im}_A(T)]\\
	&=&\cos\theta \textit{Re}_A(e^{i\phi}T)+\sin\theta \textit{Re}_A(e^{i(\phi+\frac{\pi}{2})}T)\\
	 &=& H^A_\phi \cos\theta + H^A_{\phi+\frac{\pi}{2}} \sin\theta \\
	\Rightarrow \|H^A_{\theta+\phi}\|_A &\leq &\|H^A_\phi\cos\theta \|_A+\|H^A_{\phi+\frac{\pi}{2}}\sin\theta \|_A\\
	 \Rightarrow \|H^A_{\theta+\phi}\|_A  &\leq & \sqrt{{\| H^A_\phi\|_A}^2+{\| H^A_{\phi+\frac{\pi}{2}}\|_A}^2}.
\end{eqnarray*}
Taking supremum over $\theta \in \mathbb{R}$, we get
\[ w_A^2(T) \leq  {{\| H^A_\phi\|_A}^2+{\| H^A_{\phi+\frac{\pi}{2}}\|_A}^2}.\] This completes the proof.
\end{proof}

\begin{remark}\label{remark:remark1} We see that Theorem \ref{theorem:upperbound2} holds for all $\phi \in [0,2\pi)$, so we get
\[w^2_A(T) \leq \inf_{\phi \in [0,2\pi)} \{{\|H^A_\phi\|_A}^2+{\|H^A_{\phi+\frac{\pi}{2}}\|_A}^2\}.\]
Noting that for $ \phi = 0, \| H^A_\phi \|_A = \| \textit{Re}_A(T) \|_A $ and $\| H^A_{\phi + \pi/2}\|_A = \| \textit{Im}_A(T) \|_A,$  it follows from the above inequality that  $$w_A^2(T) \leq {\|\textit{Re}_A(T)\|_A^2+\|\textit{Im}_A(T)\|_A^2}.$$
\end{remark}

Next we compute an upper bound for B-numerical radius  of $2 \times 2$ operator matrices where $B=\textit{diag} (A,A)$.  Here we note that the operator $\left(\begin{array}{cc}
T_{11} & T_{12} \\
T_{21} & T_{22} 
\end{array}\right)$ is in $\mathcal{B}_B(\mathcal{H} \oplus \mathcal{H})$ if the operator $T_{ij}$  (for $i,j=1,2$) are in $\mathcal{B}_A(\mathcal{H})$ and in the case (see \cite[Lemma 3.1]{BFP})
$$ \left(\begin{array}{cc}
T_{11} & T_{12} \\
T_{21} & T_{22} 
\end{array}\right)^{\sharp_B}=\left(\begin{array}{cc}
T^{\sharp_A}_{11} & T^{\sharp_A}_{21} \\
T^{\sharp_A}_{12} & T^{\sharp_A}_{22} 
\end{array}\right).$$ 

Next we prove the following inequality.

\begin{lemma}\label{lemma-2.4}
Let $T_{11}, T_{12} \in \mathcal{B}_A(\mathcal{H})$ and $B=\left(\begin{array}{cc}
A&O\\
O&A
\end{array}\right)$. Then
\[w_B\left(\begin{array}{cc}
T_{11}&T_{12} \\
O& O
\end{array}\right) \leq  \frac{1}{2}\Big[w_A(T_{11})+\sqrt{w^2_A(T_{11})+\|T_{12}\|_A^2} \Big].\]
\end{lemma}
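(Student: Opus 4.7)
The plan is to exploit Zamani's formula $w_B(T)=\sup_{\theta\in\mathbb{R}}\|\textit{Re}_B(e^{i\theta}T)\|_B$ applied to the $2\times 2$ block matrix, and then to reduce the problem to a scalar optimization on the unit circle of $\mathbb{R}^2$. Throughout, set $T=\bigl(\begin{smallmatrix} T_{11} & T_{12}\\ O & O\end{smallmatrix}\bigr)$.

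First I would compute $\textit{Re}_B(e^{i\theta}T)$ explicitly. Using the block formula for $T^{\sharp_B}$ recalled just before the lemma, one gets
$$T^{\sharp_B}=\begin{pmatrix} T_{11}^{\sharp_A} & O\\ T_{12}^{\sharp_A} & O\end{pmatrix},\qquad \textit{Re}_B(e^{i\theta}T)=\begin{pmatrix} \textit{Re}_A(e^{i\theta}T_{11}) & \tfrac{e^{i\theta}}{2}T_{12}\\[2pt] \tfrac{e^{-i\theta}}{2}T_{12}^{\sharp_A} & O\end{pmatrix}.$$
By Zamani's Lemma 2.3 applied with $A$ replaced by $B$, we have $\|\textit{Re}_B(e^{i\theta}T)\|_B=w_B(\textit{Re}_B(e^{i\theta}T))$, so it suffices to bound the $B$-numerical radius of this Hermitian-like matrix for each $\theta$.

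Next, for $v=(x,y)^{T}\in\mathcal{H}\oplus\mathcal{H}$ with $\|v\|_B^2=\|x\|_A^2+\|y\|_A^2=1$, I would compute the inner product
$$\langle \textit{Re}_B(e^{i\theta}T)v,v\rangle_B=\langle \textit{Re}_A(e^{i\theta}T_{11})x,x\rangle_A+2\,\textit{Re}\!\left(\tfrac{e^{i\theta}}{2}\langle T_{12}y,x\rangle_A\right),$$
where I used the identity $\langle T_{12}^{\sharp_A}x,y\rangle_A=\overline{\langle T_{12}y,x\rangle_A}$. Then the triangle inequality followed by Cauchy--Schwarz (for the semi-inner product $\langle\cdot,\cdot\rangle_A$) yields
$$|\langle \textit{Re}_B(e^{i\theta}T)v,v\rangle_B|\le \|\textit{Re}_A(e^{i\theta}T_{11})\|_A\,\|x\|_A^2+\|T_{12}\|_A\,\|x\|_A\|y\|_A.$$

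Setting $\alpha_\theta:=\|\textit{Re}_A(e^{i\theta}T_{11})\|_A$, $\beta:=\|T_{12}\|_A$, $c:=\|x\|_A$, $s:=\|y\|_A$ with $c^2+s^2=1$, the remaining task is to maximize $\alpha_\theta c^2+\beta\,cs$. Parametrizing by $c=\cos\varphi$, $s=\sin\varphi$ gives $\tfrac{\alpha_\theta}{2}+\tfrac{\alpha_\theta}{2}\cos 2\varphi+\tfrac{\beta}{2}\sin 2\varphi$, whose maximum is $\tfrac12(\alpha_\theta+\sqrt{\alpha_\theta^2+\beta^2})$. Taking the supremum over $\theta$ and invoking $\sup_\theta \alpha_\theta=w_A(T_{11})$ (Zamani's Theorem 2.5) together with the fact that $t\mapsto \tfrac12(t+\sqrt{t^2+\beta^2})$ is increasing in $t\ge 0$, I obtain exactly the stated bound.

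The routine steps are the matrix algebra and the trigonometric optimization. The one spot requiring care is the final step: one cannot separately take the supremum over $\theta$ inside the square root unless one notes the monotonicity of $t\mapsto \tfrac12(t+\sqrt{t^2+\beta^2})$; without this observation one would only get a bound involving $\sup_\theta\alpha_\theta$ inside the root but not necessarily outside, so monotonicity is what makes both occurrences collapse to $w_A(T_{11})$ simultaneously.
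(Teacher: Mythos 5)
Your proof is correct, and it shares the same skeleton as the paper's --- both start from Zamani's formula $w_B(T)=\sup_{\theta}\|\textit{Re}_B(e^{i\theta}T)\|_B$, compute the same block matrix $\textit{Re}_B(e^{i\theta}T)$, and end with the same scalar quantity $\tfrac12\bigl(\alpha_\theta+\sqrt{\alpha_\theta^2+\beta^2}\bigr)$ --- but the middle step is genuinely different. The paper invokes an external block-matrix lemma (\cite[Lemma 4.15]{BP}) to dominate $\|\textit{Re}_B(e^{i\theta}T)\|_B$ by the spectral norm of the $2\times 2$ nonnegative scalar matrix of entrywise $A$-norms, then computes the largest eigenvalue of $\bigl(\begin{smallmatrix} w_A(T_{11}) & \|T_{12}\|_A/2\\ \|T_{12}\|_A/2 & 0\end{smallmatrix}\bigr)$; implicitly it also uses monotonicity of the spectral norm in the entries of a nonnegative symmetric matrix. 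You instead pass to $w_B(\textit{Re}_B(e^{i\theta}T))$ via Zamani's Lemma 2.3, estimate the quadratic form $\langle \textit{Re}_B(e^{i\theta}T)v,v\rangle_B$ directly by Cauchy--Schwarz, and solve the resulting trigonometric maximization by hand; your closing remark about the monotonicity of $t\mapsto\tfrac12(t+\sqrt{t^2+\beta^2})$ is exactly the point that the paper's entrywise-monotonicity step handles, so nothing is missing. The trade-off: the paper's route is shorter given the cited lemma and generalizes mechanically to fuller block matrices (as in Theorem \ref{th-2.7}), while yours is self-contained and makes visible precisely where the eigenvalue formula comes from. All the individual identities you use (the block formula for $T^{\sharp_B}$, $\langle T_{12}^{\sharp_A}x,y\rangle_A=\overline{\langle T_{12}y,x\rangle_A}$, $\sup_\theta\|\textit{Re}_A(e^{i\theta}T_{11})\|_A=w_A(T_{11})$) check out against the preliminaries stated in the paper.
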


\begin{proof}
Let $T=\left(\begin{array}{cc}
T_{11}&T_{12} \\
O& O
\end{array}\right).$
Then for any $\theta \in \mathbb{R}$, and using \cite[Lemma 4.15]{BP} we have
\begin{eqnarray*}
\| \mbox{Re}_B(e^{i\theta}T)\|_B&=& \left\|\left(\begin{array}{cc}
\mbox{Re}_A(e^{i\theta}T_{11})&\frac{1}{2}e^{i\theta}T_{12} \\
\frac{1}{2}(e^{i\theta}T_{12})^{{\sharp}_A}& O
\end{array}\right)\right\|_B\\
&\leq& \left\|\left(\begin{array}{cc}
\|\mbox{Re}_A(e^{i\theta}T_{11})\|_A&\frac{1}{2}\|e^{i\theta}T_{12}\|_A \\
\frac{1}{2}\|(e^{i\theta}T_{12})^{{\sharp}_A}\|_A& 0
\end{array}\right)\right\|\\
&\leq& \left \|\left(\begin{array}{cc}
w_A(T_{11})&\frac{1}{2}\|T_{12}\|_A \\
\frac{1}{2}\|T_{12}\|_A & 0
\end{array}\right)\right\|\\
&=& \frac{1}{2}\left[w_A(T_{11})+\sqrt{w_A^2(T_{11})+\|T_{12}\|_A^2}\right].
\end{eqnarray*} 
This completes the proof.
\end{proof}

By using an argument similar to that in the proof of Lemma \ref{lemma-2.4}, we can prove the following inequality. To prove this inequality, first we need the following lemma which can be found in \cite[Lemma 3.8]{BFP}.

\begin{lemma}\label{Bhunia et. al.}
Let $T\in \mathcal{B}_A(\mathcal{H})$ and $U$ be an $A$-unitary operator on $\mathcal{H}$. Then $$w_A(U^{\sharp_A}TU)=w_A(T).$$
\end{lemma}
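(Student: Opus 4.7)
The plan is to establish $w_A(U^{\sharp_A}TU) = w_A(T)$ by proving both inequalities directly from the defining supremum, exploiting the two halves of the A-unitarity condition, namely that both $U$ and $U^{\sharp_A}$ are A-isometries.

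For the easier direction, the first step is the A-adjoint identity $\langle U^{\sharp_A}y, x\rangle_A = \langle y, Ux\rangle_A$, which gives $\langle U^{\sharp_A}TUx, x\rangle_A = \langle TUx, Ux\rangle_A$. Substituting $y = Ux$ preserves the unit A-sphere because $\|Ux\|_A = \|x\|_A$, so the absolute value of the left-hand side is dominated by $w_A(T)$; taking the supremum yields $w_A(U^{\sharp_A}TU) \leq w_A(T)$.

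For the reverse inequality, given $z$ with $\|z\|_A = 1$, I would choose $x := U^{\sharp_A}z$, which again has unit A-norm since $U^{\sharp_A}$ is an A-isometry. The task then reduces to proving $\langle TUx, Ux\rangle_A = \langle Tz, z\rangle_A$. First I would verify that $Ux$ and $z$ differ by a vector in $\mathcal{N}(A)$: expanding $\|UU^{\sharp_A}z - z\|_A^2$ and applying $\langle UU^{\sharp_A}z, z\rangle_A = \|U^{\sharp_A}z\|_A^2 = \|z\|_A^2$ shows this quantity vanishes, so $A(Ux - z) = 0$.

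The most delicate step is transferring this A-nullity through $T$. For this I would invoke the standard fact that every $T \in \mathcal{B}_A(\mathcal{H})$ leaves $\mathcal{N}(A)$ invariant, an immediate consequence of the defining relation $AT^{\sharp_A} = T^*A$. Once $T(Ux - z) \in \mathcal{N}(A)$, a routine manipulation using the ordinary self-adjointness of $A$ gives $\langle TUx, Ux\rangle_A = \langle Tz, z\rangle_A$, and supremizing over $z$ yields the matching inequality $w_A(T) \leq w_A(U^{\sharp_A}TU)$, completing the equality.
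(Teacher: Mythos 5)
Your proof is correct. Note that the paper does not actually prove this lemma: it imports it verbatim as \cite[Lemma~3.8]{BFP} (and earlier quotes the companion fact $W_A(T)=W_A(UTU^{\sharp_A})$ from \cite{BFA}), so there is no in-paper argument to compare against; what you have supplied is a self-contained replacement. Both halves of your argument check out. The inequality $w_A(U^{\sharp_A}TU)\leq w_A(T)$ follows exactly as you say from $\langle U^{\sharp_A}TUx,x\rangle_A=\langle TUx,Ux\rangle_A$ together with $\|Ux\|_A=\|x\|_A$. For the reverse inequality you correctly identify the genuine subtlety: since $A$ is only positive (not invertible), $UU^{\sharp_A}$ need not equal the identity, so one cannot simply substitute $z=UU^{\sharp_A}z$; it only agrees with $z$ modulo $\mathcal{N}(A)$, which you establish by the polarization computation $\|UU^{\sharp_A}z-z\|_A^2=0$ using both halves of the $A$-unitarity hypothesis. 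Your appeal to the invariance of $\mathcal{N}(A)$ under any $T\in\mathcal{B}_A(\mathcal{H})$ is also legitimate --- it follows either from the adjoint of the relation $AT^{\sharp_A}=T^*A$, namely $AT=(T^{\sharp_A})^*A$, or even more directly from the Douglas characterization $\|ATx\|\leq\lambda\|Ax\|$ quoted in the introduction --- and it is exactly what is needed to kill the cross terms $\langle Tw,z\rangle_A$ with $w=UU^{\sharp_A}z-z$. The remaining cross terms vanish using only $Aw=0$ and the self-adjointness of $A$. So the argument is complete and, unlike the paper, does not outsource anything to the literature.
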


\begin{theorem}\label{th-2.5}
Let $T_{11}, T_{12}, T_{21},T_{22} \in \mathcal{B}_A(\mathcal{H})$ and $T=\left(\begin{array}{cc}
T_{11}&T_{12} \\
T_{21}& T_{22}
\end{array}\right)$.  If $B=\left(\begin{array}{cc}
A&O\\
O&A
\end{array}\right)$ then

\begin{eqnarray*}
w_B\left(T\right) &\leq&  \frac{1}{2}\left [w_A(T_{11})+w_A(T_{22})+\sqrt{w^2_A(T_{11})+\|T_{12}\|_A^2}+\sqrt{w^2_A(T_{22})+\|T_{21}\|_A^2} \right].
\end{eqnarray*}
\end{theorem}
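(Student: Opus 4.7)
The plan is to split $T=T^{(1)}+T^{(2)}$ additively so that each piece has the shape handled by Lemma~\ref{lemma-2.4}, and then to combine the two bounds via subadditivity of $w_B$. Take
\[ T^{(1)}=\left(\begin{array}{cc} T_{11} & T_{12} \\ O & O \end{array}\right),\qquad T^{(2)}=\left(\begin{array}{cc} O & O \\ T_{21} & T_{22} \end{array}\right). \]
The piece $T^{(1)}$ is already in the form of Lemma~\ref{lemma-2.4} and contributes the summand $\tfrac{1}{2}[w_A(T_{11})+\sqrt{w_A^2(T_{11})+\|T_{12}\|_A^2}]$ of the claimed bound.

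The interesting step is bounding $w_B(T^{(2)})$. My plan is to conjugate with the swap $V=\left(\begin{smallmatrix} O & I \\ I & O \end{smallmatrix}\right)$ on $\mathcal{H}\oplus \mathcal{H}$ to bring $T^{(2)}$ into the shape required by Lemma~\ref{lemma-2.4}. Using the $2\times 2$ $\sharp_B$-adjoint formula recalled in the excerpt gives $V^{\sharp_B}=\left(\begin{smallmatrix} O & P \\ P & O \end{smallmatrix}\right)$, where $P$ denotes the orthogonal projection onto $\overline{\mathcal{R}(A)}$; the identities $AP=PA=A$ then imply $\|Vx\|_B=\|V^{\sharp_B}x\|_B=\|x\|_B$, so $V$ is genuinely $B$-unitary. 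A direct multiplication gives
\[ V^{\sharp_B}T^{(2)}V=\left(\begin{array}{cc} PT_{22} & PT_{21} \\ O & O \end{array}\right), \]
which fits the hypothesis of Lemma~\ref{lemma-2.4}. Hence Lemma~\ref{Bhunia et. al.} (applied with $A$ replaced by $B$) followed by Lemma~\ref{lemma-2.4} yields
\[ w_B(T^{(2)})\leq \tfrac{1}{2}\bigl[w_A(PT_{22})+\sqrt{w_A^2(PT_{22})+\|PT_{21}\|_A^2}\bigr]. \]

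To finish, I would verify that the stray projections can be discarded: for any $S\in \mathcal{B}_A(\mathcal{H})$ one has $\langle PSx,x\rangle_A=\langle Sx,x\rangle_A$ and $\|PSx\|_A=\|Sx\|_A$, both immediate consequences of $AP=A$, so $w_A(PT_{22})=w_A(T_{22})$ and $\|PT_{21}\|_A=\|T_{21}\|_A$. Combining with the bound for $T^{(1)}$ through the subadditivity $w_B(T)\leq w_B(T^{(1)})+w_B(T^{(2)})$ gives the theorem.

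The principal subtlety, and the likely reason Lemma~\ref{Bhunia et. al.} is highlighted just before the theorem, is that $V^{\sharp_B}$ is not the naive swap but carries the projection $P$ because $I^{\sharp_A}=P$ rather than $I$ in the semi-Hilbertian setting. One must therefore check both that $V$ is $B$-unitary (not merely $B$-isometric on one side) and that the projections produced by conjugation do not affect $w_A$ or $\|\cdot\|_A$; both rest on the elementary identity $AP=A$.
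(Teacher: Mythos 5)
Your proposal is correct and follows essentially the same route as the paper: the same splitting $T=T^{(1)}+T^{(2)}$, the same swap operator, an appeal to Lemma~\ref{Bhunia et. al.} to reduce $T^{(2)}$ to the form of Lemma~\ref{lemma-2.4}, and subadditivity of $w_B$. In fact you are slightly more careful than the paper, which writes $U^{\sharp_B}T^{(2)}U=\left(\begin{smallmatrix} T_{22} & T_{21}\\ O & O\end{smallmatrix}\right)$ without the projections $P$ that your computation correctly exhibits and then harmlessly removes via $AP=PA=A$.
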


\begin{proof}
Let $U=\left(\begin{array}{cc}
O&I \\
I& O
\end{array}\right).$ Then it is clear to see that for all $x \in \mathcal{H} \oplus \mathcal{H}$, $\|Ux\|_B=\|x\|_B$ and $\|U^{\sharp_B}x\|_B=\|x\|_B$. So, $U$ is a $B$-unitary operator on $\mathcal{H} \oplus \mathcal{H}$. Therefore by using Lemma \ref{Bhunia et. al.} we have 
\begin{eqnarray*}
w_B(T) &\leq & w_B\left(\begin{array}{cc}
T_{11}&T_{12} \\
O& O
\end{array}\right)+w_B\left(\begin{array}{cc}
O&O \\
T_{21}& T_{22}
\end{array}\right)\\
&=& w_B\left(\begin{array}{cc}
T_{11}&T_{12} \\
O& O
\end{array}\right)+w_B\left(U^{\sharp_B}\left(\begin{array}{cc}
O&O \\
T_{21}& T_{22}
\end{array}\right)U\right)\\
&=& w_B\left(\begin{array}{cc}
T_{11}&T_{12} \\
O& O
\end{array}\right)+w_B\left(\begin{array}{cc}
T_{22}&T_{21}\\
O& O
\end{array}\right).
\end{eqnarray*}
Using Lemma \ref{lemma-2.4} in the above inequality, we get the required inequality of the theorem. 
\end{proof}

\begin{remark}\label{re-2.6}
Recently, Feki \cite[Cor. 2.2]{F} proved that if $T \in \mathcal{B}_A(\mathcal{H})$ with $AT^2=O$ then $w_A(T)=\frac{1}{2}\|T\|_A.$ So $ w_B\left(\begin{array}{cc}
O&T_{12} \\
O& O
\end{array}\right)= \frac{1}{2}\left \|\left(\begin{array}{cc}
O&T_{12} \\
O& O
\end{array}\right) \right \|_B$. Since $T_{12} \in \mathcal{B}_A(\mathcal{H})$, so there exists a sequence of A-unit vectors $\{y_n\}$ in $\mathcal{H}$, i.e., $\|y_n\|_A=1$ such that $\|T_{12}y_n\|_A \rightarrow \|T_{12}\|_A.$ By a simple calculation we have $\left \|\left(\begin{array}{cc}
O&T_{12} \\
O& O
\end{array}\right) \left(\begin{array}{c}
0 \\
y_n
\end{array}\right)\right \|_B=\|T_{12}y_n\|_A \rightarrow \|T_{12}\|_A.$ This implies that $ \left \|\left(\begin{array}{cc}
O&T_{12} \\
O& O
\end{array}\right) \right \|_B $ $\geq \|T_{12}\|_A.$ Also by a simple calculation we have for any B-unit vector $(x_1,x_2) \in \mathcal{H} \oplus \mathcal{H}$, 
$\left \|\left(\begin{array}{cc}
O&T_{12} \\
O& O
\end{array}\right) \left(\begin{array}{c}
x_1\\
x_2
\end{array}\right)\right \|_B = \|T_{12}x_2\|_A\leq \|T_{12}\|_A\|x_2\|_A \leq \|T_{12}\|_A.$ This implies that $ \left \|\left(\begin{array}{cc}
O&T_{12} \\
O& O
\end{array}\right) \right \|_B \leq \|T_{12}\|_A.$ So, $ \left \|\left(\begin{array}{cc}
O&T_{12} \\
O& O
\end{array}\right) \right \|_B = \|T_{12}\|_A.$
Here we would like to remark that if we consider $T_{11}=T_{21}=T_{22}=O$ then the inequalities in Lemma \ref{lemma-2.4}  and Theorem \ref{th-2.5} admits of an equality, i.e., $ w_B\left(\begin{array}{cc}
O&T_{12} \\
O& O
\end{array}\right)=\frac{\|T_{12}\|_A}{2}.$ 
\end{remark}

\begin{remark}\label{Remark-bp}
We give an example to show the bound obtained in Theorem \ref{th-2.5} is better than the upper bound obtained in \cite[Th. 3.4]{BP}. If we consider $A=I$, $T_{11}=T_{22}=O$, $T_{12}=I$ and $T_{21}=-2I$ then Theorem \ref{th-2.5} gives $w_B\left(\begin{array}{cc}
T_{11}&T_{12} \\
T_{21}& T_{22}
\end{array}\right)\leq \frac{3}{2}$, whereas \cite[Th. 3.4]{BP} gives $w_B\left(\begin{array}{cc}
T_{11}&T_{12} \\
T_{21}& T_{22}
\end{array}\right)\leq 2.$
\end{remark}

Next we obtain another upper bound.

\begin{theorem}\label{th-2.7}
Let $T_{11}, T_{12}, T_{21},T_{22} \in \mathcal{B}_A(\mathcal{H})$ and $B=\left(\begin{array}{cc}
A&O\\
O&A
\end{array}\right)$. Then
\begin{eqnarray*}
w_B\left(\begin{array}{cc}
T_{11}&T_{12} \\
T_{21}& T_{22}
\end{array}\right) &\leq&  \frac{1}{2}w_A(T_{11})+w_A(T_{22})\\
&& +\frac{1}{2}\sqrt{t^2w^2_A(T_{11})+\|T_{12}\|_A^2} +\frac{1}{2}\sqrt{(1-t)^2w^2_A(T_{11})+\|T_{21}\|_A^2},
\end{eqnarray*}
where $0\leq t \leq 1.$
\end{theorem}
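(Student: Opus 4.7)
The plan is to split $T$ into three blocks and bound each with the tools already introduced. For the free parameter $t\in[0,1]$ I would write
\[
T = \begin{pmatrix} tT_{11} & T_{12} \\ O & O \end{pmatrix} + \begin{pmatrix} (1-t)T_{11} & O \\ T_{21} & O \end{pmatrix} + \begin{pmatrix} O & O \\ O & T_{22} \end{pmatrix},
\]
and apply the triangle inequality for $w_B$, which is immediate from the linearity of $\langle\cdot,\cdot\rangle_B$.

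The first summand is exactly in the upper-row shape handled by Lemma \ref{lemma-2.4}, producing the term $\tfrac{t}{2}w_A(T_{11}) + \tfrac{1}{2}\sqrt{t^{2}w^2_A(T_{11})+\|T_{12}\|_A^2}$. The third summand is direct: any $B$-unit vector $(x_1,x_2)$ yields quadratic form $\langle T_{22}x_2,x_2\rangle_A$ with $\|x_2\|_A\le 1$, so its $B$-numerical radius equals $w_A(T_{22})$.

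The essential step is bounding the middle summand $M=\begin{pmatrix}(1-t)T_{11} & O \\ T_{21} & O\end{pmatrix}$, whose block shape is the transpose of the one that Lemma \ref{lemma-2.4} accepts; moreover, $B$-unitary conjugation (the trick used in the proof of Theorem \ref{th-2.5}) permutes rows and columns but cannot flip a lower-triangular block pattern into an upper-triangular one. My device is the $\sharp_B$-flip: by the formula recalled just before Lemma \ref{lemma-2.4},
\[
M^{\sharp_B}=\begin{pmatrix}(1-t)T_{11}^{\sharp_A} & T_{21}^{\sharp_A} \\ O & O \end{pmatrix},
\]
and $w_B(X^{\sharp_B})=w_B(X)$ for every $X\in\mathcal{B}_B(\mathcal{H}\oplus\mathcal{H})$, because $\langle X^{\sharp_B}y,y\rangle_B=\overline{\langle Xy,y\rangle_B}$. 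Applying Lemma \ref{lemma-2.4} to $M^{\sharp_B}$ and invoking the identities $w_A(T^{\sharp_A})=w_A(T)$ (the same conjugate-symmetry argument with $B=A$) and $\|T^{\sharp_A}\|_A=\|T\|_A$ (explicitly recalled in the excerpt) yields
\[
w_B(M)\le \tfrac{1-t}{2}w_A(T_{11})+\tfrac{1}{2}\sqrt{(1-t)^{2}w^2_A(T_{11})+\|T_{21}\|_A^2}.
\]

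Summing the three bounds reproduces the right-hand side of the theorem exactly. The main obstacle is recognising that Lemma \ref{lemma-2.4} cannot be applied to $M$ as stated, and that neither an iterated triangle inequality (which loses the square-root shape) nor $B$-unitary conjugation alone (which only permutes the blocks) suffices; once the $\sharp_B$-invariance of $w_B$ is in hand, the remainder of the proof is a double application of Lemma \ref{lemma-2.4} together with the triangle inequality.
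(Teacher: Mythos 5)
Your proof is correct, but it follows a genuinely different route from the paper's. The paper never decomposes $T$ itself; instead it works with Zamani's formula $w_B(T)=\sup_\theta\|\textit{Re}_B(e^{i\theta}T)\|_B$, splits the $B$-self-adjoint operator $2\textit{Re}_B(e^{i\theta}T)$ into three $B$-self-adjoint blocks (distributing $2\textit{Re}_A(e^{i\theta}T_{11})$ with weights $t$ and $1-t$), dominates each block by the numerical radius of a $2\times 2$ scalar matrix of $A$-norms via the pinching-type lemma \cite[Lemma 4.15]{BP}, and finally takes the supremum over $\theta$. Because each piece there is $B$-self-adjoint, the asymmetry between the $(1,2)$ and $(2,1)$ corners never arises, so the paper needs no analogue of your $\sharp_B$-flip. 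Your argument instead applies the triangle inequality for $w_B$ directly to $T=\bigl(\begin{smallmatrix} tT_{11}&T_{12}\\ O&O\end{smallmatrix}\bigr)+\bigl(\begin{smallmatrix}(1-t)T_{11}&O\\ T_{21}&O\end{smallmatrix}\bigr)+\bigl(\begin{smallmatrix}O&O\\ O&T_{22}\end{smallmatrix}\bigr)$, and your key observation --- that $w_B(X^{\sharp_B})=w_B(X)$ since $\langle X^{\sharp_B}y,y\rangle_B=\overline{\langle Xy,y\rangle_B}$, so the lower-row block can be converted to the upper-row shape that Lemma \ref{lemma-2.4} accepts (something the swap $B$-unitary indeed cannot do while keeping $T_{11}$ in the $(1,1)$ corner) --- is a clean and valid substitute. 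Combined with $w_A(S^{\sharp_A})=w_A(S)$, $\|S^{\sharp_A}\|_A=\|S\|_A$, the homogeneity $w_A(tT_{11})=t\,w_A(T_{11})$, and the elementary bound $w_B(\mathrm{diag}(O,T_{22}))\le w_A(T_{22})$, the three estimates sum to exactly the stated right-hand side. Your route is arguably more elementary (no supremum over $\theta$, no scalar-matrix pinching beyond what is already inside Lemma \ref{lemma-2.4}), while the paper's route stays uniformly within the $\textit{Re}_B(e^{i\theta}\cdot)$ machinery it uses throughout Section 2; both yield the identical bound.
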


\begin{proof}
Let $T=\left(\begin{array}{cc}
T_{11}&T_{12} \\
T_{21}& T_{22}
\end{array}\right)$. Using \cite[Th. 4.12]{BP} we have for any $\theta \in \mathbb{R}$,
\begin{eqnarray*}
2 \|\textit{Re}_B(e^{i\theta}T)\|_B &=& 2 w_B(\textit{Re}_B(e^{i\theta}T))\\
&=& w_B\left(\begin{array}{cc}
2\textit{Re}_A(e^{i\theta}T_{11})&e^{i \theta}T_{12}+(e^{i \theta}T_{21})^{\sharp_A} \\
e^{i \theta}T_{21}+(e^{i \theta}T_{12})^{\sharp_A}& 2\textit{Re}_A(e^{i\theta}T_{22})
\end{array}\right)\\
&\leq& w_B\left(\begin{array}{cc}
2t\textit{Re}_A(e^{i\theta}T_{11})&e^{i \theta}T_{12} \\
(e^{i \theta}T_{12})^{\sharp_A}& O
\end{array}\right)\\
&& +w_B\left(\begin{array}{cc}
2(1-t)\textit{Re}_A(e^{i\theta}T_{11})&(e^{i \theta}T_{21})^{\sharp_A} \\
e^{i \theta}T_{21}& O
\end{array}\right)\\
&& + w_B\left(\begin{array}{cc}
O&O \\
O&2\textit{Re}_A(e^{i\theta}T_{22})
\end{array}\right)\\
&\leq& w\left(\begin{array}{cc}
2tw_A(\textit{Re}_A(e^{i\theta}T_{11}))&\|e^{i \theta}T_{12}\|_A \\
\|(e^{i \theta}T_{12})^{\sharp_A}\|_A& 0
\end{array}\right)\\
&& +w\left(\begin{array}{cc}
2(1-t)w_A(\textit{Re}_A(e^{i\theta}T_{11}))&\|(e^{i \theta}T_{21})^{\sharp_A}\|_A \\
\|e^{i \theta}T_{21}\|_A& 0
\end{array}\right)\\
&& + w\left(\begin{array}{cc}
0&0 \\
0&2w_A(\textit{Re}_A(e^{i\theta}T_{22}))
\end{array}\right)\\
&\leq & w\left(\begin{array}{cc}
2t w_A(T_{11})&\|T_{12}\|_A \\
\|T_{12}\|_A& 0
\end{array}\right)\\
&& + w\left(\begin{array}{cc}
2(1-t) w_A(T_{11})&\|T_{21}\|_A \\
\|T_{21}\|_A& 0
\end{array}\right)\\
&&+ 2w_A(T_{22})\\
\end{eqnarray*}

\begin{eqnarray*}
&=& t w_A(T_{11})+\sqrt{t^2w_A^2(T_{11})+\|T_{12}\|_A^2} \\
&& +(1-t) w_A(T_{11})+\sqrt{(1-t)^2w_A^2(T_{11})+\|T_{21}\|_A^2} \\
&& + 2w_A(T_{22})\\
&\leq&  w_A(T_{11})+2w_A(T_{22})\\
&& +\sqrt{t^2w^2_A(T_{11})+\|T_{12}\|_A^2} +\sqrt{(1-t)^2w^2_A(T_{11})+\|T_{21}\|_A^2}.
\end{eqnarray*}
Taking supremum over $\theta \in \mathbb{R}$, we get
\begin{eqnarray*}
w_B(T) &\leq&  \frac{1}{2}w_A(T_{11})+w_A(T_{22})+\frac{1}{2}\sqrt{t^2w^2_A(T_{11})+\|T_{12}\|_A^2} \\
&& \hspace{4cm}+\frac{1}{2}\sqrt{(1-t)^2w^2_A(T_{11})+\|T_{21}\|_A^2}.
\end{eqnarray*}
This completes the proof.
\end{proof}

Using an argument similar to that in the proof of Theorem \ref{th-2.7}, we can prove the following inequality.

\begin{theorem}\label{th-2.8}
Let $T_{11}, T_{12}, T_{21},T_{22} \in \mathcal{B}_A(\mathcal{H})$ and $B=\left(\begin{array}{cc}
A&O\\
O&A
\end{array}\right)$. Then
\begin{eqnarray*}
w_B\left(\begin{array}{cc}
T_{11}&T_{12} \\
T_{21}& T_{22}
\end{array}\right) &\leq&  \frac{1}{2}w_A(T_{22})+w_A(T_{11})\\
&& +\frac{1}{2}\sqrt{t^2w^2_A(T_{22})+\|T_{21}\|_A^2} +\frac{1}{2}\sqrt{(1-t)^2w^2_A(T_{22})+\|T_{12}\|_A^2},
\end{eqnarray*}
where $0\leq t \leq 1.$
\end{theorem}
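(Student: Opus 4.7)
The plan is to mirror the proof of Theorem \ref{th-2.7}, but to transfer the role of the splitting parameter $t$ from $T_{11}$ to $T_{22}$. Concretely, for any $\theta \in \mathbb{R}$, I would start from the identity
\[
2 \textit{Re}_B(e^{i\theta}T) \;=\; \left(\begin{array}{cc} 2\textit{Re}_A(e^{i\theta}T_{11}) & e^{i\theta}T_{12}+(e^{i\theta}T_{21})^{\sharp_A} \\ e^{i\theta}T_{21}+(e^{i\theta}T_{12})^{\sharp_A} & 2\textit{Re}_A(e^{i\theta}T_{22}) \end{array}\right)
\]
obtained via \cite[Lemma 3.1]{BFP}, together with the equality $\|\textit{Re}_B(e^{i\theta}T)\|_B = w_B(\textit{Re}_B(e^{i\theta}T))$ from Zamani's lemma recalled in the introduction.

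Next I would decompose this matrix, in contrast to Theorem \ref{th-2.7}, as the sum of the three blocks
\[
\left(\begin{array}{cc} O & O \\ e^{i\theta}T_{21} & 2t\textit{Re}_A(e^{i\theta}T_{22}) \end{array}\right), \quad \left(\begin{array}{cc} O & (e^{i\theta}T_{12}) \\ O & 2(1-t)\textit{Re}_A(e^{i\theta}T_{22}) \end{array}\right), \quad \left(\begin{array}{cc} 2\textit{Re}_A(e^{i\theta}T_{11}) & (e^{i\theta}T_{21})^{\sharp_A} \\ (e^{i\theta}T_{12})^{\sharp_A} & O \end{array}\right),
\]
apply the triangle inequality for $w_B$, and then for each of the first two blocks invoke a conjugation by the $B$-unitary $U=\bigl(\begin{smallmatrix} O & I \\ I & O \end{smallmatrix}\bigr)$ (Lemma \ref{Bhunia et. al.}) so that they take the standard upper-triangular form treated by \cite[Lemma 4.15]{BP}. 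The third block is bounded by $2w_A(T_{11}) + \ldots$; here the off-diagonal contributions are simply pushed to the norm level via $\|e^{i\theta}T_{jk}\|_A = \|T_{jk}\|_A$ and $\|(e^{i\theta}T_{jk})^{\sharp_A}\|_A = \|T_{jk}\|_A$.

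After this reduction, each block's $B$-numerical radius is dominated by the numerical radius of the corresponding $2\times 2$ scalar matrix with nonnegative entries. Using the closed-form $w\bigl(\begin{smallmatrix} a & b \\ b & 0 \end{smallmatrix}\bigr) = \tfrac{1}{2}\bigl(a + \sqrt{a^2+4b^2}\bigr)$ (or more precisely the matching form $\tfrac12(a+\sqrt{a^2+b^2})$ used in Lemma \ref{lemma-2.4}) with $a = 2t\,w_A(T_{22})$ and $b = \|T_{21}\|_A$ for the first block, and with $a = 2(1-t)\,w_A(T_{22})$ and $b = \|T_{12}\|_A$ for the second, one obtains
\[
2\|\textit{Re}_B(e^{i\theta}T)\|_B \leq 2w_A(T_{11}) + w_A(T_{22}) + \sqrt{t^2w_A^2(T_{22}) + \|T_{21}\|_A^2} + \sqrt{(1-t)^2 w_A^2(T_{22}) + \|T_{12}\|_A^2}.
\]

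Finally I would divide by $2$ and take the supremum over $\theta \in \mathbb{R}$, invoking the Zamani identity $w_A(T) = \sup_\theta \|\textit{Re}_A(e^{i\theta}T)\|_A$ (applied at the level of $B$) to produce $w_B(T)$ on the left, yielding exactly the claimed bound. The main technical obstacle is nothing deep but a bookkeeping one: the decomposition has to be chosen so that the scalar dominance lemma applies cleanly to each summand, and in particular the off-diagonal $T_{jk}$ must be paired with the correct fraction $t$ or $1-t$ of $T_{22}$; once the decomposition above is fixed, every remaining step is a direct transcription of the proof of Theorem \ref{th-2.7}.
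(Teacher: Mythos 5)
Your overall strategy --- decompose $2\textit{Re}_B(e^{i\theta}T)$ into three summands, apply the triangle inequality for $w_B$, reduce each summand to a $2\times 2$ scalar matrix, and take the supremum over $\theta$ --- is exactly the intended adaptation of Theorem \ref{th-2.7} (the paper gives no separate proof of Theorem \ref{th-2.8}, only the remark that it is analogous). However, your specific decomposition does not close the bookkeeping, because it separates each off-diagonal operator from its $A$-adjoint. Two things go wrong. First, in each of your first two summands only one off-diagonal entry is nonzero, so after conjugation the relevant scalar matrix is $\bigl(\begin{smallmatrix} a & b \\ 0 & 0 \end{smallmatrix}\bigr)$, whose numerical radius is $\tfrac12\bigl(a+\sqrt{a^2+b^2}\bigr)$; with $a=2t\,w_A(T_{22})$ and $b=\|T_{21}\|_A$ this yields $t\,w_A(T_{22})+\sqrt{t^2w_A^2(T_{22})+\tfrac14\|T_{21}\|_A^2}$, not the term $\sqrt{t^2w_A^2(T_{22})+\|T_{21}\|_A^2}$ you wrote down (the latter arises from the symmetric matrix $\bigl(\begin{smallmatrix} a & b \\ b & 0 \end{smallmatrix}\bigr)$, i.e.\ from a block containing \emph{both} $e^{i\theta}T_{21}$ and $(e^{i\theta}T_{21})^{\sharp_A}$). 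That slip alone would be harmless, since it is an underestimate. Second, and fatally, your third summand $\bigl(\begin{smallmatrix} 2\textit{Re}_A(e^{i\theta}T_{11}) & (e^{i\theta}T_{21})^{\sharp_A} \\ (e^{i\theta}T_{12})^{\sharp_A} & O \end{smallmatrix}\bigr)$ cannot be bounded by $2w_A(T_{11})$: its scalar reduction is $\bigl(\begin{smallmatrix} 2w_A(T_{11}) & \|T_{21}\|_A \\ \|T_{12}\|_A & 0 \end{smallmatrix}\bigr)$, with numerical radius $w_A(T_{11})+\sqrt{w_A^2(T_{11})+\tfrac14(\|T_{12}\|_A+\|T_{21}\|_A)^2}$, and the extra off-diagonal contribution cannot simply be ``pushed to the norm level'' and discarded. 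Summing your three blocks therefore gives a quantity that is \emph{not} dominated by the claimed right-hand side (let $w_A(T_{22})\to\infty$ with $\|T_{12}\|_A,\|T_{21}\|_A$ fixed and nonzero: the savings in the square-root terms vanish while the surplus from the third block persists), so the argument as written does not prove the theorem.

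The repair is precisely the decomposition of Theorem \ref{th-2.7} with the roles of $T_{11}$ and $T_{22}$ interchanged: keep each off-diagonal operator together with its $A$-adjoint and pair it with a fraction of $2\textit{Re}_A(e^{i\theta}T_{22})$, namely write $2\textit{Re}_B(e^{i\theta}T)$ as the sum of $\left(\begin{array}{cc} O & (e^{i\theta}T_{21})^{\sharp_A} \\ e^{i\theta}T_{21} & 2t\textit{Re}_A(e^{i\theta}T_{22}) \end{array}\right)$, $\left(\begin{array}{cc} O & e^{i\theta}T_{12} \\ (e^{i\theta}T_{12})^{\sharp_A} & 2(1-t)\textit{Re}_A(e^{i\theta}T_{22}) \end{array}\right)$ and $\left(\begin{array}{cc} 2\textit{Re}_A(e^{i\theta}T_{11}) & O \\ O & O \end{array}\right)$. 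Then \cite[Th. 4.12]{BP} reduces the first two summands to symmetric scalar matrices with numerical radii $t\,w_A(T_{22})+\sqrt{t^2w_A^2(T_{22})+\|T_{21}\|_A^2}$ and $(1-t)w_A(T_{22})+\sqrt{(1-t)^2w_A^2(T_{22})+\|T_{12}\|_A^2}$, the third contributes exactly $2w_A(\textit{Re}_A(e^{i\theta}T_{11}))\leq 2w_A(T_{11})$, and dividing by $2$ and taking the supremum over $\theta$ gives the stated bound; no conjugation by $U$ is needed at this stage.
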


\begin{remark}\label{re-2.8}
(i) If we consider $A=I$, then we can show with examples that the inequality in Theorem \ref{th-2.7} ( also Theorem \ref{th-2.8})  improve on the inequalities \cite[Cor. 3.1]{S} and \cite[Cor. 3.4]{S}, obtained by Shebrawi. Consider $T_{11}=T_{12}=T_{21}=I, T_{22}=O$, then Theorem \ref{th-2.7} (for $t=\frac{1}{2}$) gives  $w\left(\begin{array}{cc}
T_{11}&T_{12} \\
T_{21}& T_{22}
\end{array}\right)\leq \frac{1+\sqrt{5}}{2}$,  but \cite[Cor. 3.1]{S} and \cite[Cor. 3.4]{S} both gives $w\left(\begin{array}{cc}
T_{11}&T_{12} \\
T_{21}& T_{22}
\end{array}\right)\leq \frac{2+\sqrt{2}}{2}$. Also if we consider $T_{22}=T_{12}=T_{21}=I, T_{11}=O$, then Theorem \ref{th-2.8} (for $t=\frac{1}{2}$) gives better bound than the bounds in\cite[Cor. 3.1]{S} and \cite[Cor. 3.4]{S}.\\
(ii) If we consider the same example of Remark \ref{Remark-bp} then we see that the bounds obtained in Theorem \ref{th-2.7} and Theorem \ref{th-2.8} improve on the right hand inequality in \cite[Th. 3.4]{BP}.
\end{remark}

\section{\textbf{Application: Estimation of zeros of a polynomial}}
\label{sec:2}

\noindent One can easily compute the exact roots of an $n$ degree polynomial for $n \leq 4$ but for degrees that are higher than $4$, we can merely ascertain a bound to the roots. Humbling as this endeavour might seem, over the years, various researchers have dedicated their attention with both vigour and rigour to this end. Following suit, we have come up with our own determination of bounds using $A$-numerical radius of an operator acting  on semi-Hilbertian space $(\mathcal{H}, \langle.,.\rangle_A)$,  where $A>0$, which in turn act as a refinement of those proposed by some of the former mathematicians in this field. Let $p(z)=\sum_{j=0}^na_jz^j$ be a polynomial, where $a_j\in \mathbb{C}$ and $a_n=1.$ Let $C(p)$ is Frobenius companion matrix (see \cite{BBP}) of $p(z)$ where  
\begin{eqnarray*}
  C(p)&=&\left(\begin{array}{ccccccc}
    -a_{n-1}&-a_{n-2}&\ldots&-a_1&-a_0 \\
    1&0&\ldots&0&0\\
		0&1&\ldots&0&0\\
    \vdots\\
    0&0&\ldots&1&0
    \end{array}\right).
		\end{eqnarray*}
It is well-known that all roots of $p(z)$ are eigenvalues of $C(p)$. Before the proof of our main result in this section, we first give some famous bounds for zeros of $p(z)$. Let $\mu $ be a zero of $p(z).$\\
(1) Cauchy \cite{CM} gives
		   \[ |\mu|\leq 1+\max \left \{ |a_i|:i=0,1 \ldots,n-1 \right\}=R_C.\]
(2) Carmichael and Mason \cite{CM} gives 
		\[ |\mu|\leq \left (1+\sum_{i=0}^{n-1}|a_i|^2 \right)^{\frac{1}{2}}=R_{CM}.\]
(3) Fujii and Kubo \cite{FK} gives
			\[|\mu|\leq \frac{1}{2}\big[\big(\sum_{j=0}^{n-1}|a_j|^2\big)^{\frac{1}{2}}+|a_{n-1}|\big]+\cos\frac{\pi}{n+1}=R_{FK}.\]
			
\noindent Now we prove our main result in this section. To prove this, the following proposition is essential.

\begin{proposition}\label{pr-3.1}
Let $T\in \mathcal{B}_A(\mathcal{H})$ and there exists $m>0$ such that $A \geq mI >0.$ Then $\sigma(T)\subseteq \overline{W_A(T)}.$
\end{proposition}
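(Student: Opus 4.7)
The plan is to prove the contrapositive: if $\lambda \notin \overline{W_A(T)}$, then $T - \lambda I$ is invertible in $\mathcal{B}(\mathcal{H})$, and hence $\lambda \notin \sigma(T)$. The hypothesis $A \geq mI > 0$ makes $A$ boundedly invertible with $\|A^{-1}\| \leq 1/m$, and turns the semi-norm $\|\cdot\|_A$ into an honest norm satisfying $\sqrt{m}\,\|x\| \leq \|x\|_A \leq \sqrt{\|A\|}\,\|x\|$, i.e.\ equivalent to the original Hilbert norm. I would record these facts at the outset, since the rest of the argument shuttles between the two norms.

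By the translation property $W_A(T - \lambda I) = W_A(T) - \lambda$, it suffices to handle $\lambda = 0$ and prove that $0 \notin \overline{W_A(T)}$ forces $T$ to be invertible. Set $\eta := \mathrm{dist}\bigl(0, \overline{W_A(T)}\bigr) > 0$, so that $|\langle Tx, x\rangle_A| \geq \eta\|x\|_A^2$ for every $x \in \mathcal{H}$. Cauchy--Schwarz for the positive semidefinite form $\langle \cdot, \cdot\rangle_A$ then gives $\|Tx\|_A\|x\|_A \geq |\langle Tx, x\rangle_A| \geq \eta\|x\|_A^2$, and therefore $\|Tx\|_A \geq \eta\|x\|_A$. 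The norm equivalence converts this into a bound $\|Tx\| \geq c\|x\|$ with $c > 0$, so $T$ is injective with closed range.

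To upgrade this to bijectivity I would transfer the estimate to the $A$-adjoint. From $AT^{\sharp_A} = T^*A$ and $A^* = A$ one checks directly the identity $\langle T^{\sharp_A}x, x\rangle_A = \overline{\langle Tx, x\rangle_A}$, which shows $W_A(T^{\sharp_A})$ is the complex conjugate of $W_A(T)$ and hence also avoids $0$. Rerunning the Cauchy--Schwarz step yields $T^{\sharp_A}$ bounded below in $\|\cdot\|_A$, and since $T^* = A\,T^{\sharp_A}A^{-1}$ with $A, A^{-1}$ bounded invertibles, $T^*$ is bounded below in $\|\cdot\|$. Thus $\ker T^* = \{0\}$, the closed range of $T$ is dense, and so $T$ is surjective; combined with injectivity, $T$ is invertible.

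The main obstacle is the bookkeeping between $\|\cdot\|$ and $\|\cdot\|_A$ together with the verification of $\langle T^{\sharp_A}x, x\rangle_A = \overline{\langle Tx, x\rangle_A}$, because these are what let the one-sided ``bounded below'' estimate coming from $W_A(T)$ be promoted to a two-sided one. Once those pieces are in place the argument is a routine spectrum-is-contained-in-closure-of-numerical-range separation, with the strict positivity of $A$ (absent in the general semi-Hilbertian setting) doing the heavy lifting by ensuring $A^{-1}$ exists as a bounded operator.
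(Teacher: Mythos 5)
Your proof is correct, but it takes a genuinely different route from the paper's. The paper argues in the forward direction: it quotes that the boundary of $\sigma(T)$ lies in the approximate point spectrum, shows that every approximate eigenvalue lies in $\overline{W_A(T)}$ (normalizing the approximating vectors in $\|\cdot\|_A$, which is where $A\geq mI$ enters, since it guarantees $\|x_n\|_A^2\geq m$), and then invokes the convexity of $W_A(T)$ to pass from the boundary of the spectrum to all of it. You instead prove the contrapositive by the classical resolvent estimate: if $\operatorname{dist}(\lambda,\overline{W_A(T)})=\eta>0$, then Cauchy--Schwarz for $\langle\cdot,\cdot\rangle_A$ gives $\|(T-\lambda I)x\|_A\geq\eta\|x\|_A$, the identity $\langle T^{\sharp_A}x,x\rangle_A=\overline{\langle Tx,x\rangle_A}$ (which does check out, using $AT^{\sharp_A}=T^*A$ and $A=A^*$) gives the same lower bound for the adjoint, and norm equivalence $\sqrt{m}\|x\|\leq\|x\|_A\leq\sqrt{\|A\|}\,\|x\|$ converts both into genuine bounded-below estimates, yielding invertibility of $T-\lambda I$. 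Your approach avoids the two external inputs the paper relies on (convexity of $W_A(T)$ from Baklouti--Feki--Sid Ahmed and $\partial\sigma(T)\subseteq\sigma_{app}(T)$) at the cost of more operator-theoretic bookkeeping; it also uses the hypothesis $A\geq mI$ more heavily, through bounded invertibility of $A$ and the formula $T^{\sharp_A}=A^{-1}T^*A$, rather than only through the lower bound $\|x_n\|_A^2\geq m$. Both are complete proofs; yours is the more self-contained, the paper's the shorter given its cited lemmas.
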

\begin{proof}
Since the boundary of the spectrum $\sigma (T)$ of $T$ is contained in the approximate point spectrum $\sigma_{app}(T)$ of $T$ (see \cite[p. 6]{GR}) and $W_A(T)$ is convex subset of $\mathbb{C}$ (see \cite[Th. 2.1]{BFA}), so it is sufficient to prove that $\sigma_{app}(T)\subseteq \overline{W_A(T)}.$ Let $\lambda \in \sigma_{app}(T)$. Then there exists a sequence of unit vectors $\{x_n\}$ in $\mathcal{H}$ such that $\|(T-\lambda I)x_n\|\rightarrow 0.$ Since $A\geq mI$, so  $\|x_n\|_A^2= \langle Ax_n,x_n \rangle \geq m.$ So, $\frac{1}{\|x_n\|_A}\leq \frac{1}{\sqrt{m}}$ for all $n$. Therefore, there exists a subsequence $\{c_{n_k}\}=\{\frac{1}{\|x_{n_k}\|_A}\}$ such that $c_{n_k}\rightarrow c$ for some real scalar $c.$ Using this fact we have $\left \|(T-\lambda I)\frac{x_{n_k}}{\|x_{n_k}\|_A} \right \|\rightarrow 0.$ So we get
\begin{eqnarray*}
\left \|(T-\lambda I)\frac{x_{n_k}}{\|x_{n_k}\|_A} \right \|_A &=& \sqrt{\left \langle A(T-\lambda I)\frac{x_{n_k}}{\|x_{n_k}\|_A},(T-\lambda I)\frac{x_{n_k}}{\|x_{n_k}\|_A} \right \rangle } \rightarrow  0.\\
\end{eqnarray*}	
	
By Cauchy Schwarz inequality, we get	
	
\begin{eqnarray*}	
\left | \left \langle (T-\lambda I)\frac{x_{n_k}}{\|x_{n_k}\|_A},\frac{x_{n_k}}{\|x_{n_k}\|_A} \right \rangle _A \right |
&\leq &	\left \| (T-\lambda I)\frac{x_{n_k}}{\|x_{n_k}\|_A} \right \|_A \rightarrow  0\\
\Rightarrow  \left \langle T \frac{x_{n_k}}{\|x_{n_k}\|_A},\frac{x_{n_k}}{\|x_{n_k}\|_A} \right \rangle _A &\rightarrow & \lambda.\\
\end{eqnarray*}	
 So, $\lambda \in \overline{W_A(T)}.$ Therefore $\sigma_{app}(T)\subseteq \overline{W_A(T)}.$ 
\end{proof}

\begin{remark}\label{pr-3.2}
Here we note that if we consider $A\geq 0$ in Proposition \ref{pr-3.1}, then $\sigma(T)$ may not be contained in $\overline{W_A(T)}.$ Because there may exist an $x_0(\neq 0) \in \mathcal{H}$ such that $Tx_0=\lambda x_0$ and $Ax_0=0$ and then $\|x_0\|_A=0.$ So, $\langle Tx_0,x_0 \rangle_A \notin \overline{W_A(T)}. $\\
As for example, we consider $T=\left(\begin{array}{cc}
1&0\\
0&3
\end{array}\right)$ and $A=\left(\begin{array}{cc}
1&0\\
0&0
\end{array}\right ).$ Then $W_A(T)=\{1\}$ and $\sigma(T)=\{1,3\}$. So in this case,  $\sigma(T)$ is not contained in $\overline{W_A(T)}.$\\
\end{remark}

\noindent We now prove our desired bound for zeros of $p(z)$.		
\begin{theorem}\label{th-3.3}
Let $\mu$ be a zero of $p(z).$  Then $$|\mu| \leq \max \big\{\alpha_1, \alpha_2, \alpha_3, \ldots, \alpha_n \big\}=R_{PRK},$$
where 
\begin{eqnarray*}
\alpha_1&=&\frac{1}{d_1}\left[\frac{d_1}{2}(|a_{n-1}|+\sum_{i=0}^{n-1}|a_i|)+\frac{d_2}{2}\right],\\
\alpha_2&=&\frac{1}{d_2}\left[\frac{d_1}{2}|a_{n-2}|+\frac{d_2}{2}+\frac{d_3}{2}\right],\\
\alpha_3&=&\frac{1}{d_3}\left[\frac{d_1}{2}|a_{n-3}|+\frac{d_3}{2}+\frac{d_4}{2}\right],\\
\vdots &&\\
\alpha_{n-1}&=&\frac{1}{d_{n-1}}\left[\frac{d_1}{2}|a_{1}|+\frac{d_{n-1}}{2}+\frac{d_n}{2}\right],\\
\alpha_{n}&=&\frac{1}{d_n}\left[\frac{d_1}{2}|a_{0}|+\frac{d_n}{2}\right], 
\end{eqnarray*}
and $d_1,d_2,\ldots,d_n$ are arbitrary real strictly positive constants.
\end{theorem}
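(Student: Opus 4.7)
The strategy is to bound $|\mu|$ by the $A$-numerical radius of the Frobenius companion matrix $C(p)$, for a diagonal weight $A = \textit{diag}(d_1, d_2, \ldots, d_n)$. Since each $d_i > 0$ we have $A > 0$, so Proposition \ref{pr-3.1} gives $\sigma(C(p)) \subseteq \overline{W_A(C(p))}$, and every root $\mu$ of $p(z)$ (being an eigenvalue of $C(p)$) therefore satisfies $|\mu| \leq w_A(C(p))$. The whole problem reduces to showing $w_A(C(p)) \leq R_{PRK}$.

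For this upper bound I would invoke Zamani's identity $w_A(T) = \sup_{\theta \in \mathbb{R}} \|\textit{Re}_A(e^{i\theta} T)\|_A$ recalled in the introduction, and bound each $\|\textit{Re}_A(e^{i\theta} C(p))\|_A$ uniformly in $\theta$. Because $A$ is invertible, $T^{\sharp_A} = A^{-1} T^* A$, so the matrix $\textit{Re}_A(e^{i\theta} C(p)) = \tfrac{1}{2}(e^{i\theta} C(p) + e^{-i\theta} A^{-1} C(p)^* A)$ can be written down entrywise; the sparsity of $C(p)$ (one full row plus a single subdiagonal of ones) means each row has at most three nonzero entries, at columns $1$, $k-1$, and $k+1$ (with overlaps for $k \in \{1,2,n\}$). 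The key observation is that $\textit{Re}_A(e^{i\theta} C(p))$ is $A$-self-adjoint, so its similarity $A^{1/2} \textit{Re}_A(e^{i\theta} C(p)) A^{-1/2}$ is Hermitian; consequently $\|\textit{Re}_A(e^{i\theta} C(p))\|_A$ equals the spectral radius of $\textit{Re}_A(e^{i\theta} C(p))$, which the classical Gershgorin disk theorem caps by the maximum absolute row sum $\max_k \sum_j |M_{kj}|$.

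It then remains to tally the row sums. For interior rows $3 \leq k \leq n-1$ the three active entries have moduli $\tfrac{d_1 |a_{n-k}|}{2 d_k}$, $\tfrac{1}{2}$, and $\tfrac{d_{k+1}}{2 d_k}$ and reproduce $\alpha_k$ exactly; row $n$ drops the third term to give $\alpha_n$; row $2$ requires one triangle-inequality step on the composite $(2,1)$ entry $\tfrac{1}{2} e^{i\theta} - \tfrac{d_1}{2d_2} e^{-i\theta} \overline{a_{n-2}}$ to recover $\alpha_2$; row $1$ combines the bound $|\textit{Re}(e^{i\theta}a_{n-1})| \leq |a_{n-1}|$, a triangle inequality on the composite $(1,2)$ entry, and the moduli of the remaining $(1,j)$ entries, which after reindexing assembles into $\alpha_1$. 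All four bounds are independent of $\theta$, so $w_A(C(p)) \leq \max_k \alpha_k = R_{PRK}$.

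The main obstacle is the clean identification $\|M\|_A = r(M)$ for $A$-self-adjoint $M$, which depends on $A^{1/2} M A^{-1/2}$ being both Hermitian and similar to $M$; without this step the naive Gershgorin bound on the $A$-operator norm would only give a weaker estimate with $\sqrt{d_i / d_j}$ factors rather than the required $d_i / d_j$ factors. The secondary bookkeeping obstacle is handling the ``anomalous'' first two rows and last row, where the interaction of the full first row of $C(p)$ with the single subdiagonal of $C(p)^*$ (after conjugation by $A$) produces the hybrid expression $\tfrac{1}{2}\sum_{i=0}^{n-1}|a_i|$ appearing in $\alpha_1$.
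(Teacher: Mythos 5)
Your proposal is correct, but the route to the key estimate $w_A(C(p))\leq\max_k\alpha_k$ is genuinely different from the paper's. The paper never forms $\textit{Re}_A(e^{i\theta}C(p))$ at all: it writes out the quadratic form $\langle C(p)x,x\rangle_A=\langle AC(p)x,x\rangle$ for an $A$-unit vector $x$, applies $2|x_i||x_j|\leq|x_i|^2+|x_j|^2$ to every cross term, and regroups so that the coefficient of $|x_k|^2$ is exactly $d_k\alpha_k$; the bound $\max_k\alpha_k$ then falls out from $\sum_k d_k|x_k|^2=1$. You instead go through Zamani's identity $w_A(T)=\sup_\theta\|\textit{Re}_A(e^{i\theta}T)\|_A$, the fact that an $A$-self-adjoint $M$ satisfies $\|M\|_A=r(M)$ (via the Hermitian similarity $A^{1/2}MA^{-1/2}$, legitimate here since the diagonal $A$ is invertible and $T^{\sharp_A}=A^{-1}T^*A$), and Gershgorin. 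Your row-sum tallies check out and reproduce each $\alpha_k$ exactly --- indeed they coincide term-by-term with the paper's regrouped coefficients, which is no accident, since the AM--GM estimate on a sesquilinear form is essentially the standard proof of the row-sum bound for Hermitian matrices. The paper's argument is shorter and more elementary (no $\theta$-supremum, no spectral-radius identity); yours makes the mechanism more transparent and systematic, and the $\|M\|_A=r(M)$ observation is exactly the right lever to get the $d_i/d_j$ rather than $\sqrt{d_i/d_j}$ weights. One small slip in your exposition: the claim that ``each row has at most three nonzero entries'' fails for the first row of $\textit{Re}_A(e^{i\theta}C(p))$, which is full; this does not affect your proof since your row-$1$ tally correctly accounts for all the $(1,j)$ entries and assembles them into $\alpha_1$.
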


\begin{proof}
Let $C=C(p)$ and $A=\textit{diag}~(d_1,d_2,\ldots,d_n).$ Let $x=(x_1,x_2,\ldots,x_n)^t\in \mathbb{C}^n$ with $\|x\|_A=1$, i.e., $\sum_{i=1}^nd_i|x_i|^2=1.$\\
Now, $\langle Cx,x\rangle_A= \langle ACx,x \rangle=-d_1(a_{n-1}|x_1|^2+a_{n-2}x_2\overline{x_1}+\ldots+a_1x_{n-1}\overline{x_1}+a_0x_{n}\overline{x_1})+d_2x_1\overline{x_2}+d_3x_2\overline{x_3}+\ldots+d_nx_{n-1}\overline{x_n}.$\\  Therefore,

\begin{eqnarray*} 
\mid \langle Cx,x \rangle_A  \mid &\leq& \frac{d_1}{2}\big[2|a_{n-1}||x_1|^2+|a_{n-2}|(|x_2|^2+|x_1|^2)+\ldots+|a_0|(|x_{n}|^2+|x_1|^2)\big] \\
&&+ \frac{d_2}{2}(|x_2|^2+|x_1|^2)\\
&&+ \frac{d_3}{2}(|x_3|^2+|x_2|^2)\\
&& \vdots\\
&&+ \frac{d_{n-1}}{2}(|x_{n-1}|^2+|x_{n-2}|^2)\\
&&+ \frac{d_n}{2}(|x_{n}|^2+|x_{n-1}|^2)\\
&=& d_1\alpha_1|x_1|^2+d_2\alpha_2|x_2|^2+\ldots+d_n\alpha_n|x_n|^2\\
&\leq & \max \big\{\alpha_1, \alpha_2, \alpha_3, \ldots, \alpha_n \big\}.
\end{eqnarray*}
Taking supremum over $\|x\|_A=1, x\in \mathbb{C}^n$, we get
$$w_A(C) \leq \max \big\{\alpha_1, \alpha_2, \alpha_3, \ldots, \alpha_n \big\}.$$ 
Therefore, from Proposition \ref{pr-3.1} we get, $|\mu|\leq w_A(C)$ and this gives our required bound for zeros of $p(z)$.
\end{proof}

\begin{remark}\label{re-3.4} 
We finally give an example to show that the bound for zeros obtained here is better than some of existing bounds. Consider  $p(z)=z^5+3z^2+\frac{1}{100}z+\frac{1}{10},$  then we have 
$R_C=4,~~ R_{CM}\approx 3.1638, ~~R_{FK}\approx 2.3668.$ But Theorem \ref{th-3.3} gives $|\mu| \leq R_{PRK}\approx 2.0833,$ by taking $d_1=2, d_2=1, d_3=2, d_4=\frac{1}{3}, d_5=1.$ 		
\end{remark}

\begin{acknowledgements}
The authors would like to thank the referees for their insightful suggestions that helped us to improve this article. Pintu Bhunia and Raj Kumar Nayak would like to thank UGC, Govt. of India for the financial support in the form of senior research fellowship. Prof. Kallol Paul would like to thank RUSA 2.0, Jadavpur University for the partial support.
\end{acknowledgements}

%
%



\end{document}